\newtheorem{thm}{Theorem}
\newtheorem{con}[thm]{Conjecture}
\newtheorem{lem}[thm]{Lemma}
\date{}
\newenvironment{proof}[1][Proof]
{\par\noindent{\bf #1.} }{\hspace*{\fill}\nolinebreak{$\Box$}\bigskip\par}
\title{\bf On Some Zarankiewicz Numbers and
Bipartite Ramsey Numbers for Quadrilateral}
\author{
\Large Janusz Dybizba\'{n}ski\footnote{{\tiny This research was partially supported by the Polish National Science Centre, contract number DEC-2012/05/N/ST6/03063.}}, Tomasz Dzido\footnote{\tiny This research was partially supported by the Polish National Science Centre grant 2011/02/A/ST6/00201.}\\
\small Institute of Informatics, University of Gda\'{n}sk\small \\[-0.8ex]
\small Wita Stwosza 57, 80-952 Gda\'{n}sk, Poland\\[-0.8ex]
\small {\tt \{jdybiz,tdz\}@inf.ug.edu.pl}\\
\\[-1.5ex]
and\\[-1.5ex]
\\
\Large Stanis\l aw Radziszowski\footnote{\tiny Work done while on sabbatical at the Gda\'{n}sk University of Technology, supported by a grant from
the Polish National Science Centre grant 2011/02/A/ST6/00201.} \\
\small Department of Algorithms and System Modeling \\[-0.8ex]
\small Gda\'{n}sk University of Technology \\[-0.8ex]
\small 80-233 Gda\'{n}sk, Poland \\[-0.8ex]
\small and \\[-0.8ex]
\small Department of Computer Science \\[-0.8ex]
\small Rochester Institute of Technology \\[-0.8ex]
\small Rochester, NY  14623, USA \\[-0.8ex]
\small {\tt spr@cs.rit.edu}
}
\begin{document}

\maketitle
\thispagestyle{empty}

\vspace{-0.5cm}
\begin{abstract}
The Zarankiewicz number $z(m,n;s,t)$ is the maximum number of edges
in a subgraph of $K_{m,n}$ that does not contain $K_{s,t}$ as a subgraph.
The bipartite Ramsey number $b(n_1, \cdots, n_k)$ is the least positive
integer $b$ such that any coloring of the edges of $K_{b,b}$ with $k$
colors will result in a monochromatic copy of $K_{n_i,n_i}$ in
the $i$-th color, for some $i$, $1 \le i \le k$.
If $n_i=m$ for all $i$, then we denote
this number by $b_k(m)$. In this paper we obtain
the exact values of some Zarankiewicz
numbers for quadrilateral ($s=t=2$), and we derive new bounds
for diagonal multicolor bipartite Ramsey numbers avoiding quadrilateral.
In particular, we prove that $b_4(2)=19$, and establish new general
lower and upper bounds on $b_k(2)$. 
\end{abstract}

\noindent
{\bf AMS subject classification:} 05C55, 05C35

\noindent
{\bf Keywords:} Zarankiewicz number, Ramsey number,
projective plane

\bigskip
\section{Introduction}

%
The Zarankiewicz number $z(m,n;s,t)$ is defined as the maximum number
of edges in any subgraph $G$ of the complete bipartite graph $K_{m,n}$,
such that $G$ does not contain $K_{s,t}$ as a subgraph.
Zarankiewicz numbers and related extremal graphs have been
studied by numerous authors, including
K\"{o}v\'{a}ri, S\'{o}s, and Tur\'{a}n \cite{KST},
Reiman \cite{Rei}, Irving \cite{Irv},
and Goddard, Henning, and Oellermann \cite{GHO}.
A compact summary by Bollob\'{a}s can be found in \cite{Bol}.

The bipartite Ramsey number $b(n_1,\cdots , n_k)$ is the least positive
integer $b$ such that any coloring of the edges of the complete
bipartite graph $K_{b,b}$ with $k$ colors will result in a
monochromatic copy of $K_{n_i,n_i}$ in the $i$-th color,
for some $i$, $1 \le i \le k$. If $n_i=m$ for all $i$, then we
will denote this number by $b_k(m)$. The study of bipartite
Ramsey numbers was initiated by Beineke and Schwenk in 1976,
and continued by others, in particular Exoo \cite{Ex1},
Hattingh and Henning \cite{HaH},
Goddard, Henning, and Oellermann \cite{GHO},
and Lazebnik and Mubayi \cite{LaM}.

In the remainder of this paper we consider only
the case of avoiding quadrilateral $C_4$, i.e. the case of $s=t=2$.
Thus, for brevity, in the following the Zarankiewicz numbers
will be written as $z(m,n)$ or $z(n)$, instead of $z(m,n;2,2)$
or $z(n,n;2,2)$, respectively. Similarly, the only type of
Ramsey numbers we will study is the case of $b_k(2)$.

We derive new bounds for $z(m,n)$ and $z(n)$ for some
general cases, and in particular we obtain some exact
values of $z(n)$ for $n=q^2+q-h$ and small $h\ge 0$.
This permits to establish the exact values of $z(n)$ for
all $n\le 21$, leaving the first open case for $n=22$.
We establish new lower and upper bounds on
multicolor bipartite Ramsey
numbers of the form $b_k(2)$, and we compute the exact
value for the first previously open case for $k=4$, namely
$b_4(2)=19$. Now the first open case is for $k=5$,
for which we obtain the bounds $26\le b_5(2)\le 28$.

During the time of reviewing and revising this paper we became aware of some recent independent work by others \cite{DHS, FGGP, SP}  on related problems, which we summarize in Section 5. 

\bigskip
\section{Zarankiewicz Numbers for Quadrilateral}

In 1951, Kazimierz Zarankiewicz \cite{Zar} asked what is the minimum
number of 1's in a 0-1 matrix of order $n \times n$, which
guarantees that it has a $2 \times 2$ minor of 1's. In the notation
introduced above, it asks for the value of $z(n)+1$.

The results and methods used to compute or estimate $z(n)$ are
similar to those in the widely studied case of ${\rm ex} (n,C_4)$,
where one seeks the maximum number of edges in any $C_4$-free
$n$-vertex graph. The latter ones may have triangles
(though not many since no two triangles can share an edge),
which seems to cause that computing ${\rm ex} (n,C_4)$
is harder than $z(m)$, when the number of potential
edges is about the same at $n\approx m\sqrt{2}$.

The main results to date on $z(m,n)$ or $z(n)$ were obtained
in early papers by K\"{o}v\'{a}ri, S\'{o}s, and Tur\'{a}n
(1954, \cite{KST}) and Reiman (1958, \cite{Rei}).
A nice compact summary of what is known
was presented by Bollob\'as \cite{Bol} in 1995.

\begin{thm}
{\bf (\cite{KST}, \cite{Rei}, \cite{Bol})}

\medskip
\noindent
{\rm (a)}
$z(m,n) \leq m/2 +\sqrt{m^2+4mn(n-1)}/2$\ \ 
for all $m, n \ge 1$,

\medskip
\noindent
{\rm (b)}
$z(m) \leq (m +m\sqrt{4m-3})/2$,\ \ 
for all $m \ge 1$,

\medskip
\noindent
{\rm (c)}
$z(p^2+p,p^2)=p^2(p+1)$\ \
for primes $p$,

\medskip
\noindent
{\rm (d)}
$z(q^2+q+1)=(q+1)(q^2+q+1)$\ \ 
for prime powers $q$, and

\medskip
\noindent
{\rm (e)}
$\lim_{n \rightarrow \infty} z(n)/n^{3/2}=1$.
\end{thm}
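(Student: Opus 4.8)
The plan is to prove the two one-sided bounds $\limsup_{n\to\infty} z(n)/n^{3/2} \le 1$ and $\liminf_{n\to\infty} z(n)/n^{3/2} \ge 1$ separately; together they force the limit to equal $1$. The upper bound is immediate from part (b). Writing $\sqrt{4n-3} = 2\sqrt{n}\,\sqrt{1-3/(4n)} \le 2\sqrt{n}$, we get $z(n) \le n/2 + n^{3/2}$, so $z(n)/n^{3/2} \le 1 + 1/(2\sqrt{n}) \to 1$. No construction is needed here, since part (b) already supplies the asymptotically correct ceiling.

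For the lower bound I would first exploit part (d) along the thin subsequence $n = q^2+q+1$ with $q$ a prime power. Here $z(n) = (q+1)(q^2+q+1) = (q+1)n$, and since $4n-3 = (2q+1)^2$ we have $q+1 = (\sqrt{4n-3}+1)/2$, giving $z(n)/n^{3/2} = \frac{1}{2\sqrt{n}} + \sqrt{1 - 3/(4n)} \to 1$. Thus the desired limit already holds when $n$ is restricted to this subsequence of admissible values.

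The remaining, and main, work is to transfer this to arbitrary $n$. First I would note that $z$ is nondecreasing, since $K_{n,n} \subseteq K_{n+1,n+1}$ forces $z(n) \le z(n+1)$. Given large $n$, let $q$ be the largest prime with $m := q^2+q+1 \le n$, and let $q'$ be the next prime. Monotonicity gives $z(n) \ge z(m) = (q+1)m$, while maximality of $q$ gives $n < q'^2+q'+1$, whence $m/n > (q^2+q+1)/(q'^2+q'+1)$. Factoring $z(n)/n^{3/2} \ge \bigl(z(m)/m^{3/2}\bigr)\bigl(m/n\bigr)^{3/2}$, the first factor tends to $1$ by the subsequence computation above, so it suffices to show $m/n \to 1$, equivalently $q'/q \to 1$ as $q \to \infty$.

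That final step is the crux and the main obstacle: it is precisely the statement that the ratio of consecutive primes tends to $1$, which follows from the prime number theorem. Bertrand's postulate alone yields only $q' < 2q$ and is therefore too weak for this purpose. Granting $q'/q \to 1$, we obtain $(q^2+q+1)/(q'^2+q'+1) \to 1$, hence $m/n \to 1$ and $\liminf_{n\to\infty} z(n)/n^{3/2} \ge 1$, which combined with the upper bound completes the proof.
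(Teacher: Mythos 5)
The paper offers no proof of this theorem at all: it is a compilation of classical results cited from K\"{o}v\'{a}ri--S\'{o}s--Tur\'{a}n, Reiman, and Bollob\'{a}s, accompanied only by the remark that (b) is (a) with $m=n$, that (c) is the case of equality in (a), and that (d) is the case of equality in (b) realized by the point--line incidence graph of a projective plane of order $q$. So there is nothing in the paper to compare your argument against line by line. What you have written is a correct and standard derivation of part (e) from parts (b) and (d): the bound $\sqrt{4n-3}\le 2\sqrt{n}$ gives $\limsup z(n)/n^{3/2}\le 1$ immediately, the subsequence $n=q^2+q+1$ gives the matching lower bound along prime powers, monotonicity of $z$ transfers it to all $n$, and you correctly identify that the crux is $q'/q\to 1$ for consecutive primes, which needs the prime number theorem (or at least a prime-gap result such as a prime in $[x,x+o(x)]$) and for which Bertrand's postulate is indeed too weak. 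That is exactly how (e) is proved in the literature.

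The gap is one of coverage rather than correctness: you prove only one of the five assertions. Your argument takes (a)--(d) as given inputs, but a proof of the statement as posed must also establish them. Parts (a) and (b) require the K\"{o}v\'{a}ri--S\'{o}s--Tur\'{a}n double count of paths $LRL$ (count pairs of right-endpoints covered by a common left-vertex, apply convexity of $\binom{d}{2}$, and compare against $\binom{n}{2}$ --- the same device the paper itself reuses in its Theorem 3). Parts (c) and (d) require, beyond the upper bounds, the explicit $C_4$-free constructions from affine and projective planes together with the verification that their edge counts meet the bounds from (a) and (b) with equality. Since your proof of (e) consumes both (b) and (d), leaving them unproven makes the overall argument incomplete, though nothing you did assume is false and nothing you derived from those assumptions is wrong.
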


In Theorem 1, statement (a) with $m=n$ gives statement (b), (c) is an equality in (a) for $m=p^2+p, n=p^2$ and primes $p$, and (d) is an equality in (b)
for $m=q^2+q+1$ for prime powers $q$. Statements (b) and (d) are widely cited
in contrast to somewhat forgotten (a) and (c). The equality in statement (d)
is realized by the point-line bipartite graph of any projective plane
of order $q$. We note that the statement of Theorem 1.3.3. in \cite{Bol}
has a typo in (ii), where instead of $(q-1)$ it should be $(q+1)$.
In the remainder of this section we will derive more cases
similar to statements (c) and (d).  We will be listing explicitly all coefficients in the polynomials involved, hence for easier comparison we restate
(d) as
$$z(k^2+k+1) = k^3 + 2k^2 + 2k +1 \eqno{(1)}$$
for prime powers $k$.
The results for new cases which we will consider include both
lower and upper bounds on $z(n)$ for $n=k^2+k+1-h$ with small
$h$, $1 \le h \le 4$.

\begin{thm}
For prime powers $k$, for $0 \le h \le 4$, and for $n=k^2+k+1-h$,
there exist $C_4$-free subgraphs of $K_{n,n}$ of
sizes establishing lower bounds for $z(n)$ as follows:

$$
z(k^2+k+1-h) \geq
\left\{
\begin{array}{ll}
  k^3+2k^2+2k+1 & \textrm{ for\ \ }h=0,\\
  k^3+2k^2 & \textrm{ for\ \ }h=1,\\
  k^3+2k^2-2k & \textrm{ for\ \ }h=2,\\
  k^3+2k^2-4k+1 & \textrm{ for\ \ }h=3,\ \ and\\
  k^3+2k^2-6k+2 & \textrm{ for\ \ }h=4.\\
\end{array}
\right.
\eqno{(2)}
$$
\label{tabtw}
\end{thm}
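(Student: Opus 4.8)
The plan is to obtain all five bounds by deleting vertices from a single extremal graph, namely the point--line incidence bipartite graph $G$ of the projective plane $PG(2,k)$. By Theorem~1(d) (equivalently~(1)), $G$ is a $(k+1)$-regular, $C_4$-free bipartite graph with parts $P$ (the $k^2+k+1$ points) and $L$ (the $k^2+k+1$ lines) and with exactly $k^3+2k^2+2k+1$ edges, which is the $h=0$ case. The key observation is that if we delete a set $P'\subseteq P$ of $h$ points and a set $L'\subseteq L$ of $h$ lines, the remaining graph is a $C_4$-free subgraph of $K_{n,n}$ with $n=k^2+k+1-h$, since it is a subgraph of the $C_4$-free graph $G$. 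As $G$ is $(k+1)$-regular, a standard inclusion--exclusion count of the edges meeting $P'\cup L'$ shows that the number of deleted edges equals $2h(k+1)-I(P',L')$, where $I(P',L')$ is the number of point--line incidences (edges of $G$) between $P'$ and $L'$.

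Thus, to make the surviving graph as large as possible we should choose $P'$ and $L'$ so as to maximize $I(P',L')$. Because the subgraph of $G$ induced on $P'\cup L'$ is itself $C_4$-free and bipartite with parts of size $h$, we have $I(P',L')\le z(h,h)$; and since we only seek a lower bound on $z(n)$, it suffices to exhibit, for each $h\in\{1,2,3,4\}$, a configuration of $h$ points and $h$ lines of $PG(2,k)$ whose mutual incidence count equals $1,3,6,9$ respectively. Substituting $I=1,3,6,9$ into $2h(k+1)-I$ yields the deletion costs $2k+1$, $4k+1$, $6k$, and $8k-1$, which subtracted from $k^3+2k^2+2k+1$ give exactly the right-hand sides of~(2).

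I would then supply the configurations directly. For $h=1$ take an incident point--line pair (a flag), giving $I=1$. For $h=2$ take points $p_1,p_2$ together with the line $\ell_1=p_1p_2$ and a second line $\ell_2$ through $p_1$; the only non-incidence is $p_2\notin\ell_2$, so $I=3$. For $h=3$ take a triangle: three non-collinear points and their three joining lines, each line meeting two of the points, so $I=6$. For $h=4$ take a point $p_0$, three lines $\ell_1,\ell_2,\ell_3$ through $p_0$, a line $m$ with $p_0\notin m$, and the three points $q_i=\ell_i\cap m$; then $p_0$ accounts for $3$ incidences and each $q_i$ for $2$, giving $I=9$. In every case one must check that the configuration embeds in $PG(2,k)$, which holds for each prime power $k\ge 2$ since a line carries $k+1\ge 3$ points and a point lies on $k+1\ge 3$ lines.

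The routine part is the arithmetic of the substitution; the one place that needs genuine care is the $h=4$ configuration, where the target $I=9$ is the nontrivial value $z(4,4)=9$ and one must verify both that nine incidences are attainable $C_4$-free and that the point/transversal configuration embeds for all prime powers, including the tight small case $k=2$ (the Fano plane), where $\ell_1,\ell_2,\ell_3$ exhaust the lines through $p_0$ and $m$ carries exactly $q_1,q_2,q_3$. Confirming these small cases, together with verifying that each chosen configuration is genuinely $C_4$-free inside $G$, is where I expect the main bookkeeping to lie.
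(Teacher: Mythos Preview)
Your proposal is correct and follows essentially the same approach as the paper: both delete $h$ points and $h$ lines from the incidence graph of $PG(2,k)$ and use the edge count $|E_k|-2h(k+1)+I$, exhibiting configurations with $I=1,3,6,9$ for $h=1,2,3,4$. Your $h=4$ configuration (three concurrent lines plus a transversal, together with the common point and the three intersection points) is simply the projective dual of the paper's (three collinear points plus an external point, together with the common line and the three connecting lines), so the two proofs are the same up to duality; your added remarks that $I\le z(h)$ and the check of the Fano case $k=2$ are nice touches not made explicit in the paper.
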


\begin{proof}
For each prime power $k$, consider the bipartite graph
$G_k=(P_k\cup B_k,E_k)$ of a projective plane of order $k$,
on the partite sets $P_k$ (points) and $B_k$ (lines).
We have $|P_k|=|B_k|=k^2+k+1$, $|E_k|=k^3+2k^2+2k+1$, and
for $p \in P_k$ and $l \in B_k$,
$\{p,l\}\in E_k$ if and only if point $p$ is on line $l$.
One can easily see that $G_k$ is $(k+1)$-regular and
$C_4$-free. We will construct the induced subgraphs
$H(k,h)$ of $G_k$ by removing $h$ points from $P_k$ and
$h$ lines from $B_k$, where the removed vertices
$\{p_1,\cdots,p_h\} \cup \{l_1,\cdots,l_h\}$ induce
$s(h)$ edges in $G_k$. Then, the number of edges
in $H(k,h)$ is equal to
$$|E_k|-2(k+1)h+s(h).\eqno{(3)}$$

It is easy to choose the removed vertices so that
$s(h)=0,1,3,6,9$ for $h=0,1,2,3,4$, respectively.
The case $h=0$ is trivial,
for $h=1$ we take a point on a line, and for $h=2$
we take points $p_1,p_2$, the line $l_1$ containing
them, and a second line $l_2$ containing $p_2$, so
that $p_1l_1p_2l_2$ forms a path $P_3$.
Consider three points not on a line and three lines
defined by them for $h=3$, then such removed parts induce
a $C_6$. Finally, for $h=4$, we take three collinear
points $\{p_1,p_2,p_3\}$ on line $l_1$, $p_4$ not on $l_1$,
and three lines passing through $p_4$ and the first
three points. It is easy to see that these vertices
induce a subgraph of $K_{4,4}$ with 9 edges.
To complete the proof observe that the right hand sides
of (2) are equal to the values of (3) for corresponding $h$.
\end{proof}

Next, for $1 \le h \le 3$,
we obtain the upper bound on $z(k^2+k+1-h)$
equal to the lower bound in Theorem \ref{tabtw}.
We observe that now we do not require $k$ to be
a prime power, and that obviously the equality holds
in (2) for $h=0$ by (1).

\bigskip
\begin{thm}
For all $k \ge 2$,
$$
z(k^2+k+1-h) \leq
\left\{
\begin{array}{ll}
k^3+2k^2 & \textrm{for\ \ } h=1,\\
k^3+2k^2-2k & \textrm{for\ \ } h=2, \textrm { and }\\
k^3+2k^2-4k+1 & \textrm{for\ \ } h=3.\\
\end{array}
\right.
\eqno{(4)}
$$
\label{tabtw2}
\end{thm}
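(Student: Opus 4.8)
The plan is to prove all three bounds by a single double-counting argument on the point--line incidence structure of $G$, reducing each case to a quantitative statement about how far the degree sequence can deviate from being $(k+1)$-regular.

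Let $G\subseteq K_{N,N}$ be a $C_4$-free graph with $N=k^2+k+1-h$ attaining $z(N)$ edges, with parts $A,B$, and write $a_1,\dots,a_N$ for the degrees on the $A$-side. Since $G$ is $C_4$-free, every pair of vertices of $B$ has at most one common neighbour in $A$, so counting cherries centred in $A$ gives the fundamental inequality $\sum_i\binom{a_i}{2}\le\binom{N}{2}$. First I would sharpen this into an exact identity by setting $c_i=(k+1)-a_i$, letting $C=\sum_i c_i=N(k+1)-z(N)$ be the total deficiency, and letting $t$ denote the number of pairs of $B$-vertices with no common neighbour. Expanding $\binom{a_i}{2}$ (a routine calculation, using $k(k+1)-(N-1)=h$) turns the cherry count into
\[
(2k+1)\,C \;=\; Nh+\sum_i c_i^2+2t .
\]
Because the $a_i$, hence the $c_i$, are integers and $C\ge 0$ (the average degree cannot exceed $k+1$ once $h\ge1$), we have $\sum_i c_i^2\ge C$, which already forces $2kC\ge Nh$, i.e. $C\ge Nh/(2k)$. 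This is the easy half: it reproduces the correct shape of (4) but with a constant that is too small by an amount linear in $k$ (for instance it gives only $C\ge(k+1)/2$ when the target for $h=1$ is $C\ge k$).

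The substance of the proof is closing this gap, i.e. improving the lower bound on $C$ to the exact target ($C\ge k$ for $h=1$, $C\ge 2k-1$ for $h=2$, $C\ge 3(k-1)$ for $h=3$). The displayed identity shows this is equivalent to a lower bound on the number $t$ of uncovered $B$-pairs. The point is that near-extremal graphs are almost $(k+1)$-regular, so their $N$ neighbourhoods are $(k+1)$-element subsets of a universe of only $N=k^2+k+1-h$ points, pairwise meeting in at most one point; such a family necessarily over-reaches and must leave a definite surplus of pairs uncovered. I would make this quantitative by writing each degree as $k+1$ minus a small nonnegative correction, feeding the near-regular profile back into the cherry count to see how many $B$-pairs each high-degree vertex fails to reach, and combining the $A$-side identity with its $B$-side counterpart and with the integrality of the degree sequence, so as to push $t$ up to the value realised by the extremal configuration (the projective plane of order $k$ with $h$ suitably chosen points and lines deleted, as in Theorem \ref{tabtw}).

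The main obstacle is precisely this last step: the first-order convexity-plus-integrality estimate loses roughly a factor of two, and recovering the exact constant amounts to ruling out the flat, almost-regular degree sequences that satisfy $\sum_i\binom{a_i}{2}\le\binom{N}{2}$ yet are not realizable by any $C_4$-free graph. I expect to dispose of these by a finer structural analysis of the partial linear space---controlling the distribution of degrees around $k+1$ and the interaction between the two sides---rather than by any single global inequality, treating $h=1,2,3$ uniformly. Since the entire argument is an internal counting/structure argument about $G$ and never appeals to the existence of a design, it requires no primality of $k$, which is why Theorem \ref{tabtw2} holds for all $k\ge2$ whereas the matching lower bounds of Theorem \ref{tabtw} need $k$ to be a prime power.
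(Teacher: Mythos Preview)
Your identity $(2k+1)C = Nh + \sum_i c_i^2 + 2t$ is correct and elegant, but the proposal has a genuine gap at exactly the point you flag yourself: you obtain $C \ge Nh/(2k)$, which for $h=1$ gives only $C \ge (k+1)/2$ against the target $C \ge k$, and you offer no concrete mechanism to recover the missing factor. Saying you ``expect to dispose of these by a finer structural analysis'' is a hope, not an argument; the almost-regular degree sequences you would need to rule out (e.g.\ roughly $N/2$ vertices of degree $k+1$ and $N/2$ of degree $k$ when $h=1$) satisfy every global inequality you have written down, and nothing in your outline explains what additional structural input will exclude them. Combining the identity with its $B$-side twin does not help either, since $C$ is the same on both sides and the two identities are formally identical.

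The paper closes this gap by a different idea that your proposal does not contain. Instead of attacking the symmetric $K_{N,N}$ problem directly, it first proves an \emph{asymmetric} bound $z(m,N) < (k+1)m$ for a carefully chosen $m$ (namely $m=k^2+1,\ k^2-k+1,\ k^2-2k+2$ for $h=1,2,3$), using cherry-counting on the side of size $N$; with the parts this unbalanced, the convexity estimate is tight enough to go through without any loss. This asymmetric bound says that any $m$ vertices on the $L$-side contain one of degree at most $k$. One then peels off $N-m$ such low-degree vertices from $L$ one at a time, and bounds the remaining $(m,N)$ graph by $z(m,N)\le (k+1)m-1$; summing gives $z(N)\le (k+1)m-1+k(N-m)$, which is exactly the right-hand side of (4) in each case. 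The key step you are missing is precisely this passage through an asymmetric subproblem; your symmetric double count cannot see it.
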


\begin{proof}
We will proceed with the steps A through G below in a similar way for $h=1,2$ and 3,
and we will label an item by (X.$hi$) if it is a part of step X for $h=i$.
For $h=3$ and $k=2,3$, it is known that $z(4)=9$ and $z(10)=34$ \cite{GHO}
(see also Table 1 below), and these values satisfy (4). Hence, in the
rest of the proof we will assume that $k\ge 4$ for $h=3$.
First we prove that 

\medskip
\begin{enumerate}[({A}.$h$1)]
\item
$z(k^2+1,k^2+k) < (k+1)(k^2+1)$,
\item
$z(k^2-k+1,k^2+k-1) < (k+1)(k^2-k+1)$, and
\item
$z(k^2-2k+2,k^2+k-2) < (k+1)(k^2-2k+2)$.
\end{enumerate}

In (A.$hi$) we aim at the smallest $m$, so that $z(m,n)<(k+1)m$
can still be proven by our method for $n=k^2+k+1-h$.
Suppose for contradiction that a bipartite graph $H$,
with the partite sets $L$ and $R$ of suitable orders,
attains any right hand side in (A). We will count the number
of paths $P_3$ of type $LRL$ in $H$. Since

\medskip
\begin{enumerate}[({B}.$h$1)]
\item
$\frac{(k+1)(k^2+1)}{k^2 + k} = k + \frac{k+1}{k^2+k} = k + \frac{1}{k}$,
\item
$\frac{(k+1)(k^2-k+1)}{k^2 + k-1} = (k-1) + \frac{2k}{k^2+k-1}$, and
\item
$\frac{(k+1)(k^2-2k+2)}{k^2 + k-2} = (k-2) + \frac{4k-2}{k^2+k-2}$,
\end{enumerate}
we conclude that the minimum number of such paths is
achieved in $H$ when $R$ has the degree sequence of

\medskip
\begin{enumerate}[({C}.$h$1)]
\item
$(k+1)$ vertices of degree $(k+1)$ and $(k^2-1)$ vertices of degree $k$,
\item
$2k$ vertices of degree $k$ and $(k^2-k-1)$ vertices of degree $(k-1)$, or
\item
$4k-2$ vertices of degree $(k-1)$ and $(k^2-3k)$ vertices of degree $(k-2)$,
\end{enumerate}
respectively. Hence, the number of $LRL$ paths in $H$ is at least
\begin{enumerate}[({D}.$h$1)]
\item
$(k+1){k+1 \choose 2}+(k^2-1){k \choose 2} = \frac{1}{2}k(k+1)(k^2-k+2)$, 
\item
$2k{k \choose 2}+(k^2-k-1){k-1 \choose 2} = \frac{1}{2}(k-1)(k^3-k^2+k+2)$, or
\item
$(4k-2){k-1 \choose 2}+(k^2-3k){k-2 \choose 2} = \frac{1}{2}(k-2)(k^3-2k^2+3k+2)$.
\end{enumerate}

\medskip
\noindent
On the other hand
\begin{enumerate}[({E}.$h$1)]
\item
${k^2+1 \choose 2} = \frac{1}{2}k^2(k^2+1)$, 
\item
${k^2-k+1 \choose 2} = \frac{1}{2}(k-1)(k^3-k^2+k)$, and
\item
${k^2-2k+2 \choose 2} = \frac{1}{2}(k^4-4k^3+7k^2-6k+2)$.
\end{enumerate}

\medskip
\noindent
Observe that the following hold:
\begin{enumerate}[({F}.$h$1)]
\item
$k(k+1)(k^2-k+2) > k^2(k^2+1)$ for $k \ge 1$,
\item
$(k-1)(k^3-k^2+k+2) > (k-1)(k^3-k^2+k)$ for $k \ge 2$, and
\item
$(k-2)(k^3-2k^2+3k+2) > (k^4-4k^3+7k^2-6k+2)$ for $k \ge 4$,
\end{enumerate}
which imply that (D.$hi$) $>$ (E.$hi$) for the three cases and
for $k$ as specified in (F). Consequently, in all these cases
there exist two $LRL$ paths in $H$ which share both of their endpoints.
This creates $C_4$ which is a contradiction, and thus (A) holds.
Further, we can see that any $C_4$-free bipartite graph with partite
sets $L$ and $R$ of orders as in $H$ must have the minimum degree
on part $L$ at most $k$ (otherwise (A) would not be true).

Finally, consider any $C_4$-free bipartite graph $G$ with
both partite sets of order $n=k^2+k+1-h$. Any of its subgraphs
of partite orders of $H$ must have at least one vertex of degree
at most $k$ in $L$, and together with (A) this implies that $G$
has at most

\medskip
\begin{enumerate}[({G}.$h$1)]
\item
$(k+1)k^2+kk=k^3+2k^2$,
\item
$(k+1)(k^2-k)+k(2k-1)=k^3+2k^2-2k$, or
\item
$(k+1)(k^2-2k+1)+k(3k-3)=k^3+2k^2-4k+1$
\end{enumerate}
edges for $h=1,2,3$, respectively.
These values are the same as the upper bounds claimed
in (4), which completes the proof of Theorem 3. 
\end{proof}

\begin{thm}
For any prime power $k$, and also for $k=1$,
$$
z(k^2+k+1-h) =
\left\{
\begin{array}{ll}
  k^3+2k^2+2k+1 & \textrm{ for\ \ }h=0,\\
  k^3+2k^2 & \textrm{ for\ \ }h=1,\\
  k^3+2k^2-2k & \textrm{ for\ \ }h=2,\  and\\
  k^3+2k^2-4k+1 & \textrm{ for\ \ }h=3.\\
\end{array}
\right.
$$
\label{wnz1}
\label{wnz2}
\end{thm}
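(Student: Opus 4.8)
The plan is to obtain each of the four equalities by sandwiching $z(k^2+k+1-h)$ between the lower bound of Theorem~\ref{tabtw} and the upper bound of Theorem~\ref{tabtw2}, and then to dispose of the degenerate value $k=1$ separately by direct computation. For a prime power $k$ and $0 \le h \le 3$, Theorem~\ref{tabtw} already exhibits a $C_4$-free subgraph of $K_{n,n}$ with $n=k^2+k+1-h$ whose number of edges equals the claimed right-hand side, so each of these four values is a valid lower bound for $z(n)$. The case $h=0$ is nothing more than equation~(1), i.e.\ Theorem~1(d), which holds for every prime power $k$ and needs no further argument.

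For the three remaining cases $h=1,2,3$ I would simply invoke Theorem~\ref{tabtw2}, whose upper bounds hold for all $k\ge 2$. The point is that the polynomial expressions listed in (4) coincide term by term with the lower bounds listed in (2) for $h=1,2,3$. Since every prime power $k$ with $k\ge 2$ lies in the range of both theorems, the matching lower and upper bounds force equality, and no additional inequality work is required. The only verification here is the purely formal one that the right-hand sides of (2) and (4) are literally identical for these values of $h$.

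It remains to treat $k=1$, which falls outside the prime-power hypothesis of Theorem~\ref{tabtw} and outside the $k\ge 2$ hypothesis of Theorem~\ref{tabtw2}. Here $k^2+k+1=3$, and the four asserted values reduce to $z(3)=6$, $z(2)=3$, $z(1)=1$, and $z(0)=0$. I would verify these by hand: $z(0)=0$ and $z(1)=1$ are immediate; $z(2)=3$ because $K_{2,2}$ is itself the forbidden $C_4$, so at most three of its four edges survive; and $z(3)=6$ follows from Theorem~1(b) as an upper bound (which gives $(3+3\sqrt{9})/2=6$), realized by the point-line graph of the projective plane of order $1$. Equivalently, $z(3)=6$ is just (1) evaluated at $k=1$.

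The main obstacle, such as it is, is not any hard estimate but the bookkeeping of hypotheses: one must check that the ranges of Theorems~\ref{tabtw} and~\ref{tabtw2} jointly cover all prime powers $k$ for each relevant $h$, and then handle the single boundary value $k=1$ separately. Once this is done, every assertion of the theorem is a direct transcription of bounds already proved, and the equalities follow by combining them.
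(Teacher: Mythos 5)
Your proposal is correct and follows exactly the paper's argument: the equalities are obtained by combining Theorem 1(d) (for $h=0$) with the matching lower bounds of Theorem~\ref{tabtw} and upper bounds of Theorem~\ref{tabtw2} for $h=1,2,3$, and the degenerate case $k=1$ is checked directly (the paper refers to Table 1 for this). Your explicit verification of the $k=1$ values is a harmless elaboration of the same route.
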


\begin{proof}
Theorems 1(d), \ref{tabtw} and \ref{tabtw2} imply the equality
for all prime powers $k$. The easy cases for $k=1$ hold as well,
as can be checked in Table 1.
\end{proof}

\bigskip
Goddard, Henning and Oellermann obtained the value $z(18)=81$,
and their proof is a special case of our Theorems
\ref{tabtw} and \ref{tabtw2} for $k=4$ and $h=3$.
We were not able to prove the general upper bound
of $k^3+2k^2-6k+2$ for $h=4$, but we expect that it is true.
We could only obtain one special case for $k=4$, namely $z(17)=74$,
which is established later in this section in Lemma 6.
Thus, we consider that Theorem 2 and the known values of
$z(n)$ for $n=k^2+k-3$, $k=2,3,4$ (see Table 1),
provide strong evidence for the following conjecture.

\begin{con}
For any prime power $k$,
$$z(k^2+k-3) = k^3+2k^2-6k+2.$$ 
\label{wnz3}
\end{con}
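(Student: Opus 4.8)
The lower bound $z(k^2+k-3)\ge k^3+2k^2-6k+2$ is already supplied by the case $h=4$ of Theorem~\ref{tabtw}: deleting from the order-$k$ projective plane four points and four lines that induce nine edges realizes it. So the entire content of the conjecture is the matching upper bound, and the natural plan is to run the path-counting scheme of Theorem~\ref{tabtw2} one step further, for $h=4$ and $n=k^2+k-3$. Here the value of $m$ in the analogue of step~(A) is forced: step~(G) always produces the bound $kn+(m-1)$, so to land exactly on $k^3+2k^2-6k+2$ one must take $m=k^2-3k+3$ (so $m-1=(k-1)(k-2)$). With this $m$, step~(G) would peel $n-m+1=4k-5$ vertices of $L$ of degree at most $k$ and then charge the remaining $m-1$ vertices at most $k+1$ each, giving $k(4k-5)+(k+1)(k^2-3k+2)=k^3+2k^2-6k+2$, exactly as needed. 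Thus everything reduces to proving the analogue of~(A), namely $z(k^2-3k+3,\,k^2+k-3)<(k+1)(k^2-3k+3)$.

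For this (A)-step I would repeat the computation verbatim. Since $(k+1)(k^2-3k+3)=k^3-2k^2+3$ and $\frac{k^3-2k^2+3}{k^2+k-3}=(k-3)+\frac{6k-6}{k^2+k-3}$, a hypothetical $C_4$-free graph $H$ attaining $(k+1)m$ edges minimizes its number of $LRL$ paths when the side $R$ has $6k-6$ vertices of degree $k-2$ and $k^2-5k+3$ vertices of degree $k-3$. The resulting lower bound on the number of $LRL$ paths is $(6k-6)\binom{k-2}{2}+(k^2-5k+3)\binom{k-3}{2}=\tfrac12 k(k-3)(k^2-3k+5)=\tfrac12(k^4-6k^3+14k^2-15k)$, whereas the number of pairs of vertices in $L$ is $\binom{m}{2}=\binom{k^2-3k+3}{2}=\tfrac12(k^4-6k^3+14k^2-15k+6)$.

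This is exactly where the method stalls, and it explains why the statement is only a conjecture: the guaranteed number of $LRL$ paths is $\binom{m}{2}-3$, so it falls three short of exceeding $\binom{m}{2}$, and the pigeonhole step that forced a shared pair of endpoints (hence a $C_4$) for $h\le 3$ no longer fires. Pure counting cannot close this gap; indeed a $C_4$-free $H$ with the balanced degree sequence above and all $\binom{m}{2}-3$ paths on distinct pairs is not excluded by the count alone. Any proof must therefore recover the missing paths structurally. The most promising line is to show that the extremal all-$\{k-3,k-2\}$ degree sequence on $R$, combined with the requirement that $H$ sit inside a $C_4$-free graph with both parts of size $n$, cannot be realized with every pair of $L$-vertices joined by a distinct path --- equivalently, that the near-resolvable packing this would demand does not exist --- which would force either an extra pair carrying two paths or a local deficiency, and hence a $C_4$ or a saving in step~(G). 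This finer analysis is precisely the extra work that settled the isolated case $z(17)=74$ for $k=4$ in Lemma~6; turning it into an argument uniform in all prime powers $k$ is the main obstacle, and the razor-thin margin of $3$ --- a direct echo of the nine edges removed in the extremal construction --- indicates that the bound lies right at the boundary of what this counting method can reach.
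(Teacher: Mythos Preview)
Your write-up is not a proof, and you are candid about this: you show precisely where the path-counting scheme of Theorem~\ref{tabtw2} breaks down for $h=4$, with the lower bound on the number of $LRL$ paths landing at $\binom{m}{2}-3$ rather than exceeding $\binom{m}{2}$. All of the arithmetic you present is correct --- the forced choice $m=k^2-3k+3$, the quotient $(k-3)+\frac{6k-6}{k^2+k-3}$, the extremal degree sequence on $R$, and the final deficit of exactly~$3$.

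There is nothing to compare this against, because the paper does not prove the statement either: it is stated as a conjecture, and the surrounding text says explicitly that the authors ``were not able to prove the general upper bound of $k^3+2k^2-6k+2$ for $h=4$''. The only evidence offered is the lower bound from Theorem~\ref{tabtw} together with the verified small cases $k=2,3,4$, the last via the ad~hoc argument of Lemma~6 for $z(17)$. Your analysis actually goes further than the paper in pinpointing the exact quantitative obstruction; the paper merely records that the method fails without displaying the deficit. So as a proof attempt the proposal is (correctly self-diagnosed as) incomplete, but as commentary on why the conjecture resists the available machinery it is accurate and sharper than what the paper itself provides.
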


Previous work by others \cite{GHO}, Theorem 4,
computations using {\tt nauty}, the special case
in Lemma 6 below, and the comments above,
give all the values of $z(n)$
for $n \le 21$. They are listed in Table 1, together
with the parameters $k$ and $h$ when applicable.
This leaves $z(22)$ as the first open case.
Note that in Table 1 the only cases not covered by
Theorem 4 or Conjecture 5 are those for $n=8, 14, 15$ and 16.

\bigskip
\begin{center}
\begin{tabular}{|r|cc|c||r|cc|c||r|cc|c|}
\hline
$n$&$k$&$h$&$z(n)$&$n$&$k$&$h$&$z(n)$&$n$&$k$&$h$&$z(n)$\cr
\hline
1&1&2&1&8&&&24&15&&&61\cr
2&1&1&3&9&3&4&29&16&&&67\cr
3&1&0&6&10&3&3&34&17&4&4&74\cr
4&2&3&9&11&3&2&39&18&4&3&81\cr
5&2&2&12&12&3&1&45&19&4&2&88\cr
6&2&1&16&13&3&0&52&20&4&1&96\cr
7&2&0&21&14&&&56&21&4&0&105\cr
\hline
\end{tabular}

\medskip
{\small Table 1: {$z(n)$ for $1 \leq n \leq 21$ with $k$, $h$ for $n=k^2+k+1-h$, $h \le 4$.}}
\end{center}

\bigskip
With the help of the package {\tt nauty}
developed by Brendan McKay \cite{McK},
one can easily obtain the values of $z(n)$ for $n \le 16$
and confirm the values of related numbers and extremal graphs
presented in \cite{GHO}.
However, {\tt nauty} cannot complete this task for $n\ge 17$.
The cases $18 \le n \le 21$ are settled by Theorem 4,
hence we fill in the only missing case of $n=17$ with the following
lemma.

\begin{lem}
$z(17)=74$.
\end{lem}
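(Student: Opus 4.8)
The plan is to prove the two inequalities $z(17)\ge 74$ and $z(17)\le 74$ separately, with essentially all of the difficulty residing in the upper bound. The lower bound is immediate: taking $k=4$ and $h=4$ in Theorem \ref{tabtw} produces a $C_4$-free subgraph of $K_{17,17}$ with $k^3+2k^2-6k+2=74$ edges (the incidence graph of the projective plane of order $4$ with four points and four lines deleted in the configuration described there). Hence $z(17)\ge 74$, and it remains to show that no $C_4$-free subgraph of $K_{17,17}$ has $75$ or more edges.

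First I would cut down the range of the edge count by the same convexity argument used in the proof of Theorem \ref{tabtw2}. Let $G$ be a $C_4$-free bipartite graph on parts $L,R$ with $|L|=|R|=17$ and $e$ edges, and count the $LRL$ paths centered in $R$: since no two of them may share both endpoints, $\sum_{r\in R}{d(r)\choose 2}\le{17\choose 2}=136$. For a fixed sum $e=\sum_{r}d(r)$ over $17$ terms this quantity is minimized when the degrees are as nearly equal as possible. For $e=77$ the near-regular sequence has nine vertices of degree $5$ and eight of degree $4$, giving $9{5\choose 2}+8{4\choose 2}=138>136$; the same bound therefore fails for every degree sequence summing to $77$, and so $e\le 76$ at once.

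The crux is to eliminate the two remaining values $e=75$ and $e=76$, and this is exactly the point where the uniform method of Theorem \ref{tabtw2} for $h=3$ ceases to apply, which is why the general $h=4$ bound is only conjectured. For these values the cherry inequality holds with very little slack; the extremal sequence $(5^{8},4^{9})$ for $e=76$ leaves a margin of only $2$, so the feasible degree sequences on each side are few and close to regular, and counting both the $LRL$ and the $RLR$ paths constrains them further. I would enumerate these sequences and exploit the resulting near-regularity: a $C_4$-free graph on $17\times 17$ this dense must sit as a large induced substructure of the incidence graph of a projective plane of order $4$, the rigid extremal object behind Theorem 1(d). The final elimination I expect to require a targeted computer search rather than plain isomorph-free generation with {\tt nauty}, which, as already noted, cannot reach $n=17$: for each feasible degree sequence one seeds the search with the partial point–line incidences forced by the $C_4$-free condition and verifies that no completion attains $75$ or $76$ edges. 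The main obstacle is precisely this last step, since the generic counting closes the gap only down to $76$ and pushing it the final two units to $74$ demands the structural rigidity of the order-$4$ plane together with a carefully constrained computation.
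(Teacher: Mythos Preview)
Your lower bound and the convexity step down to $e\le 76$ are fine, but the decisive part of your argument is only a plan for a computer search that you do not execute; as written this is not a proof, and your expectation that a computation is unavoidable is mistaken. The paper disposes of the upper bound by hand in a few lines, using the single known value $z(16)=67$ as a lever.

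The idea you are missing is vertex deletion against $z(16)$. Suppose $H\subseteq K_{17,17}$ is $C_4$-free with $75$ edges. For any edge $\{u,v\}$, removing both endpoints leaves a $C_4$-free subgraph of $K_{16,16}$ with $76-\deg(u)-\deg(v)\le 67$ edges, so $\deg(u)+\deg(v)\ge 9$. Pick $u\in L$ of minimum degree $\delta\le 4$; deleting $u$ and then a vertex of minimum $R$-degree $\delta'$ in $H-u$ gives $75-\delta-\delta'\le 67$, so $\delta+\delta'\ge 8$, and since $\delta,\delta'\le 4$ this forces $\delta=\delta'=4$. Now each of the four neighbours $v_1,\dots,v_4$ of $u$ has degree at least $5$ by the edge condition, and their neighbourhoods meet only in $\{u\}$ (else a $C_4$), so the sets $N(v_i)\setminus\{u\}$ partition the $16$ vertices of $L\setminus\{u\}$ into four blocks of size $4$ and each $\deg(v_i)=5$ exactly. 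Any of the remaining $13$ vertices of $R$ misses $u$ and meets each $N(v_i)$ at most once, hence has degree at most $4$. This caps the total at $4\cdot 5+13\cdot 4=72<75$, a contradiction. Thus the ``final two units'' require neither the rigidity of the order-$4$ plane nor any search: a single neighbourhood analysis anchored on $z(16)=67$ already overshoots by three.
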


\begin{proof}
For the upper bound,
suppose that there exists a $C_4$-free bipartite graph
$H = (L \cup R,E)$ with $|L|=|R|=17$, which has 75 edges.
Since $z(16) = 67$, then for every edge $\{u,v\}\in E$
we must have $\deg(u)+\deg(v) \ge 9$. Let $u$ be a vertex
of minimum degree $\delta$ in $L$. Clearly $\delta\le 4$.
Removing $u$ from $L$ gives a subgraph $H'$ of $K_{16,17}$
with $75-\delta$ edges and minimum degree $\delta'$
in the part $R$ of $H'$. Now, $z(16) = 67$ implies that
$\delta+\delta'\ge 8$, which in turn leaves the only
possibility $\delta=\delta'=4$. Hence, all four
neighbors of $u$ in $H$, $\{v_1,v_2,v_3,v_4\}$, must have
degree at least 5. Furthermore, in order to avoid $C_4$,
the neighborhoods $N_i$ of $v_i$ cannot have any other
intersection than $\{u\}$. Thus, $L\setminus \{u\}$
is partitioned into four 4-sets $N_i\setminus \{u\}$
and $\deg(v_i)=5$, for $1\le i \le 4$.
All of the other 13 vertices in $R$ are
not connected to $u$, they can have at most one
adjacent vertex in each of the $N_i$'s, and hence
they have degree at most 4. This implies that $H$
has at most 72 edges, which yields a contradiction.

The lower bound construction with 74 edges is
provided by Theorem 2 with $k=h=4$.
\end{proof}

Finally, we note that the method of the proof of Lemma 6
cannot be applied to the first open case of Conjecture 5,
$z(27)$, since it would require a good bound for an
open case of $z(26)$.


\medskip
\section{Bipartite Ramsey Numbers $b_k(2)$}

The determination of values of $b_k(2)$ appears to be difficult.
The only known exact results are:
Beineke and Schwenk proved that $b_2(2)=5$ \cite{BeS}, Exoo
found the second value $b_3(2)=11$ \cite{Ex1}, and in the next
section we show that $b_4(2)=19$.

A construction by Lazebnik and Woldar
\cite{LaW} yields $r_k(C_4) \geq k^2 +2$ for prime powers $k$,
where $r_k(G)$ is the classical Ramsey number defined as
the least $n$ such that there is a monochromatic copy of $G$
in any $k$-coloring of the edges of $K_n$. We use a slight
modification of a similar construction from \cite{LaM}, furthermore
only for the special case of graphs avoiding $C_4$ (versus
$r$-uniform hypergraphs avoiding $K^{(r)}_{2,t+1}$). In addition,
in our case we color the edges of $K_{k^2,k^2}$, while for the
graph case in \cite{LaW,LaM} the edges of $K_{k^2+1}$ are colored.
This gives us a new lower bound on $b_k(2)$, which almost
doubles an easy bound $b_k(2)\ge r_k(C_4)/2$, as follows:

\begin{thm}
For any prime power $k$, we have
$$b_k(2) \geq k^2 +1.$$
\label{bk2}
\end{thm}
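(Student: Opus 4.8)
The statement is a lower bound, so the plan is to exhibit a $k$-coloring of the edges of $K_{k^2,k^2}$ in which no color class contains a $C_4$; this shows that $b = k^2$ does not force a monochromatic $K_{2,2}$, hence $b_k(2) \ge k^2+1$. Since $k$ is a prime power, I would work over the field $\mathbb{F}_k$ and take both partite sets $L$ and $R$ to be copies of $\mathbb{F}_k^2$, writing a vertex as a pair $x=(x_1,x_2)$. This gives $|L|=|R|=k^2$ as required, and $\mathbb{F}_k$ supplies exactly $k$ symbols to serve as colors. This is the affine analogue of the plane-based colorings of \cite{LaW,LaM}, specialized to $C_4$ and transplanted to the balanced bipartite host $K_{k^2,k^2}$ in place of $K_{k^2+1}$.

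The coloring I propose is the bilinear rule
$$ c(x,y) = x_1 y_1 - x_2 - y_2 \in \mathbb{F}_k, \qquad x=(x_1,x_2)\in L,\ \ y=(y_1,y_2)\in R. $$
Two things must then be checked. First, this is a genuine $k$-coloring of all of $E(K_{k^2,k^2})$: for each pair $(x,y)$ the quantity $x_1y_1-x_2-y_2$ is a single well-defined element of $\mathbb{F}_k$, so every edge receives exactly one of the $k$ colors. Second, and this is the crux, each color class must be shown to be $C_4$-free. Fixing a color $\gamma$, the class $G_\gamma$ consists of the pairs satisfying $x_2+y_2 = x_1y_1-\gamma$.

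The key step is the common-neighbor count. Given two distinct left vertices $x,x'$, a common neighbor $y\in R$ in color $\gamma$ must satisfy both $x_2+y_2=x_1y_1-\gamma$ and $x_2'+y_2=x_1'y_1-\gamma$; subtracting eliminates $y_2$ and leaves the single linear equation $x_2-x_2'=(x_1-x_1')\,y_1$. When $x_1\ne x_1'$ this determines $y_1$ uniquely and then $y_2$ uniquely, giving exactly one common neighbor; when $x_1=x_1'$ the distinctness of $x,x'$ forces $x_2\ne x_2'$ and the equation has no solution, giving none. Either way there is at most one common neighbor, so $G_\gamma$ contains no $C_4$ whose degree-$2$ vertices lie in $R$. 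Because the defining relation is symmetric in $x$ and $y$, the identical argument with the roles of $L$ and $R$ exchanged rules out the other orientation, so every color class is $C_4$-free and the coloring has no monochromatic quadrilateral.

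I expect the only real obstacle to be choosing the algebraic form of $c$ so that the subtraction in the common-neighbor step collapses to a linear equation with a controllable leading coefficient; the bilinear term $x_1y_1$ is exactly what makes the coefficient of $y_1$ equal to $x_1-x_1'$, which is nonzero precisely when the two vertices differ in their first coordinate. Once the relation is both symmetric and bilinear in this way, the verification reduces to a routine solvability count over $\mathbb{F}_k$, and the degree bookkeeping (each vertex has $k$ neighbors in each color, for total degree $k^2$) confirms that the $k$ classes partition $K_{k^2,k^2}$ exactly.
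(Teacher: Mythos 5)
Your coloring $c(x,y)=x_1y_1-x_2-y_2$ over $\mathbb{F}_k$ and the common-neighbor count obtained by subtracting the two incidence equations are exactly the construction and verification used in the paper's proof, so the proposal is correct and takes essentially the same approach. (The second ``orientation'' check is redundant, since any $C_4$ in a bipartite graph already has two vertices in each part, but it does no harm.)
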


\begin{proof}
Let $k$ be any prime power and let $n=k^2$. We will define a
$k$-coloring of the edges of $K_{n,n}$ without monochromatic
$C_4$'s. Let $F$ be a $k$-element field,
and consider the partite sets
$L=\{(a,b) \in F \times F\}$ and
$R=\{(a',b') \in F \times F\}$. Color an edge between
two vertices in $L$ and $R$ with color $\alpha \in F$
if and only if
$$a \cdot a' - b - b' = \alpha.$$

Denote by $G_{\alpha}$ the graph consisting of the edges
in color $\alpha$. We claim that $G_{\alpha}$ contains no
monochromatic copy of $C_4$. First we argue that for $(p_1, s_1)$,
$(p_2,s_2) \in L$, $(p_1, s_1) \neq (p_2,s_2)$, the system
\begin{eqnarray*}
p_1x - s_1 - y & = & \beta\\
p_2x - s_2 - y & = & \beta
\end{eqnarray*}
has at most one solution $(x,y) \in R$ for every $\beta \in F$.
Suppose that:
\setcounter{equation}{4}
\begin{eqnarray}
p_1x - s_1 - y   &=& \beta\\
p_2x - s_2 - y   &=& \beta\\
p_1x' - s_1 - y' &=& \beta\\
p_2x' - s_2 - y' &=& \beta
\end{eqnarray}

\noindent
Adding (6) and (7) and subtracting (5) and (8)
yields $(p_2-p_1)(x-x')=0$, which implies that
$p_1=p_2$ or $x=x'$. If $p_1=p_2$, then (5) and (6)
imply that $s_1=s_2$, yielding a contradiction
$(p_1, s_1) = (p_2,s_2)$. On the other hand, if $x=x'$,
then (6) and (8) imply $y=y'$, which gives $(x, y) = (x',y')$.
\end{proof} 

\bigskip
Our next theorem improves by one the upper bound on $b_k(2)$ established
by Hattingh and Henning in 1998 \cite{HaH}, for all $k \ge 5$.

\begin{thm}
For all $k \ge 5$,
$$b_k(2) \le k^2 + k - 2.$$ 
\label{twbk}
\end{thm}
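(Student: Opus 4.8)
The plan is to use a global edge-counting argument driven by the sharp Zarankiewicz upper bound of Theorem~\ref{tabtw2}. Set $N = k^2+k-2 = k^2+k+1-3$, so that $N$ is exactly the $h=3$ case of that theorem. To show $b_k(2) \le N$ it suffices to prove that every $k$-coloring of the edges of $K_{N,N}$ contains a monochromatic $C_4$. So suppose, toward a contradiction, that some $k$-coloring avoids a monochromatic $C_4$. Then each of the $k$ color classes is a $C_4$-free subgraph of $K_{N,N}$ and hence has at most $z(N)$ edges. Since the color classes partition all $N^2$ edges of $K_{N,N}$, summing over colors forces $N^2 \le k\,z(N)$.

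Next I would invoke the $h=3$ bound of Theorem~\ref{tabtw2}, which is valid for all $k\ge 2$, namely $z(N) = z(k^2+k-2) \le k^3+2k^2-4k+1$. Substituting this into the inequality above, the whole problem reduces to checking that $N^2 \le k(k^3+2k^2-4k+1)$ cannot hold when $k \ge 5$. A direct expansion gives $N^2 = (k^2+k-2)^2 = k^4+2k^3-3k^2-4k+4$ and $k(k^3+2k^2-4k+1) = k^4+2k^3-4k^2+k$, so that $N^2 - k\,z(N) \ge k^2-5k+4 = (k-1)(k-4)$. For $k\ge 5$ this residual is strictly positive, which contradicts $N^2 \le k\,z(N)$. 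Hence no $C_4$-free $k$-coloring of $K_{N,N}$ exists, and therefore $b_k(2) \le k^2+k-2$.

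There is no real technical obstacle beyond having the precise Zarankiewicz estimate of Theorem~\ref{tabtw2} available; the entire content of the argument is concentrated in that earlier result, and what remains here is a single counting bound plus an elementary polynomial comparison. The one point that deserves care is that the contradiction needs the \emph{strict} inequality $N^2 > k\,z(N)$: the residual $(k-1)(k-4)$ vanishes at $k=4$ and is positive precisely for $k\ge 5$, which is exactly why the hypothesis $k\ge 5$ appears in the statement. I would also note, by way of motivation, that replacing the $h=3$ bound by the weaker general estimate of Theorem~1(b) would make the leading terms of $N^2$ and $k\,z(N)$ agree only asymptotically and would not yield the exact threshold; it is the tightness of the $h=3$ bound that makes the comparison close this cleanly.
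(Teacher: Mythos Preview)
Your argument is correct and is essentially identical to the paper's own proof: both use the $h=3$ case of Theorem~\ref{tabtw2} to bound each color class by $k^3+2k^2-4k+1$ edges, then derive a contradiction from $k(k^3+2k^2-4k+1) < (k^2+k-2)^2$ for $k\ge 5$. Your explicit computation of the residual $(k-1)(k-4)$ and the accompanying remark about why $k\ge5$ is the correct threshold (and why Theorem~1(b) would not suffice) match the paper's observations precisely.
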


\begin{proof}
For $n=k^2+k-2$, 
suppose that there exists a $k$-coloring $\mathcal{C}$ of the edges
of $K_{n,n}$ without monochromatic $C_4$'s. Theorem 3 with $h=3$
implies that $\mathcal{C}$ has at most $k^3 + 2k^2 - 4k +1$ edges
in any of the colors, and thus at most $m=k(k^3 + 2k^2 - 4k +1)$
edges in $\mathcal{C}$ are colored. One can easily check that
$m<n^2$ for $k \ge 5$, which completes the proof.
\end{proof}

We note that the bound of Theorem 8 is better than one which
could be obtained by the same method using Theorem 1(b) instead
of Theorem 3.
Observe also that in the proof of Theorem 8 with $k=4$ there is no
contradiction, since using $z(18)=81$ one obtains $n^2=m=324$,
and hence a 4-coloring $\mathcal{C}$ of $K_{18,18}$ is not ruled out.
Indeed, we have constructed a few of them, and one is presented
in the next section.

\section{The Ramsey Number $b_4(2)$}

\begin{thm}
$$b_4(2) = 19.$$ 
\end{thm}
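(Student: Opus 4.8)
The plan is to establish the two inequalities $b_4(2) \le 19$ and $b_4(2) \ge 19$ separately. The upper bound follows immediately from the extremal values already computed, while the lower bound requires exhibiting a single explicit coloring, which I expect to be the real work.

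For the upper bound I would argue by edge counting, exactly as in the proof of Theorem 8, but feeding in the sharp value $z(19)=88$ from Table 1 in place of the general estimate of Theorem 3. Suppose $\mathcal{C}$ is a $4$-coloring of the edges of $K_{19,19}$ with no monochromatic $C_4$. Then each color class is a $C_4$-free subgraph of $K_{19,19}$ and so carries at most $z(19)=88$ edges, whence $\mathcal{C}$ colors at most $4\cdot 88 = 352$ edges. But $K_{19,19}$ has $19^2 = 361 > 352$ edges, so some edge is left uncolored, a contradiction; therefore $b_4(2) \le 19$. Note that the general bound of Theorem 8 is vacuous at $k=4$, and indeed the remark following it shows no contradiction arises at $n=18$; it is precisely the extra information $z(19)=88$ (versus the weaker $352 < 361$ being unavailable from the $h=3$ polynomial alone) that closes the argument here.

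For the lower bound I must produce one $4$-coloring of $K_{18,18}$ with no monochromatic $C_4$, certifying $b_4(2) > 18$. The arithmetic is completely tight: $K_{18,18}$ has $18^2 = 324 = 4\cdot 81$ edges and $z(18) = 81$, so every color class must be a \emph{maximum} $C_4$-free subgraph and the four classes must partition $E(K_{18,18})$ with no slack whatsoever. Thus what is required is a decomposition of $K_{18,18}$ into four extremal $C_4$-free bipartite graphs on the common parts $L$ and $R$ of size $18$.

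This decomposition is the main obstacle, and I expect it to demand a computer search rather than a closed formula, since $18$ is not a prime power and the exact packing leaves no room for a generic algebraic template. The natural starting point is the extremal structure itself: by Theorem 2, a $C_4$-free subgraph of $K_{18,18}$ with $81$ edges is obtained from the point–line incidence graph of the projective plane $PG(2,4)$ (with $21$ points and $21$ lines) by deleting three points and three lines whose removal induces a $C_6$. I would accordingly search for four such $PG(2,4)$-derived extremal graphs, relabeled onto the same vertex sets $L$ and $R$, that tile $K_{18,18}$, i.e. a proper $4$-coloring in which each color realizes one extremal copy. Once a candidate is in hand, the verification reduces to a finite check: that the four color classes are edge-disjoint and together cover all $324$ edges, and that within each color no two vertices of $L$ (equivalently of $R$) share two common neighbors, so that each class is genuinely $C_4$-free. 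I would therefore present the coloring explicitly, for instance as an $18\times 18$ array over the four colors, and remark that its correctness is mechanically verifiable.
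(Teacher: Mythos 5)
Your proposal follows essentially the same route as the paper: the upper bound is exactly the paper's argument ($4z(19)=352<361$), and for the lower bound the paper likewise exhibits an explicit 4-coloring of $K_{18,18}$ in which each of the four color classes is a copy of the unique 81-edge extremal graph for $z(18)$ (the $PG(2,4)$-derived graph you describe), found by heuristics plus SAT solving. The only thing missing from your write-up is the actual $18\times 18$ color array, which you correctly identify as the real computational work and whose verification is indeed a finite mechanical check.
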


\begin{proof}
The same reasoning as in Theorem 8, but now for $k=4$
and $n=k^2+k-1$, gives $m=4z(19)=352<361=n^2$, which
implies the upper bound. The lower bound follows from
a 4-coloring $\mathcal{D}$ of $K_{18,18}$ without
monochromatic $C_4$'s presented in Figures 1 and 2.
This completes the proof, though we will still give an
additional description and comments on the coloring
$\mathcal{D}$ in the following.
\end{proof}

Goddard et al. \cite{GHO} showed that any extremal graph for $z(18)$
must have the degree sequence $n_4=n_5=9$ on both partite sets.
By using a computer algorithm, we have found that such graph
is unique up to isomorphism, and thus it also must be the same
as one described in the proof of Theorem 2 for $k=4$ and $h=3$.
Let us denote it by $G_{18}$, and consider its labeling as in Figure 1.
Note that four $9\times 9$ quarters of $G_{18}$ have the structure

\[ G_{18} = \left[
\begin{array}{cc}
3C_6 & S^T \\
S    & 9K_2
\end{array}
\right] ,
\]

\medskip

\begin{center}
{\scriptsize
\begin{tabular}{ c | c | c || c | c | c }
\texttt{0 {\bf 1} {\bf 1}} &\texttt{0 0 0} &\texttt{0 0 0} &\texttt{{\bf 1} 0 0} &\texttt{{\bf 1} 0 0} &\texttt{{\bf 1} 0 0} \\
\texttt{{\bf 1} 0 {\bf 1}} &\texttt{0 0 0} &\texttt{0 0 0} &\texttt{0 {\bf 1} 0} &\texttt{0 {\bf 1} 0} &\texttt{0 {\bf 1} 0} \\
\texttt{{\bf 1} {\bf 1} 0} &\texttt{0 0 0} &\texttt{0 0 0} &\texttt{0 0 {\bf 1}} &\texttt{0 0 {\bf 1}} &\texttt{0 0 {\bf 1}} \\
\hline
\texttt{0 0 0} &\texttt{0 {\bf 1} {\bf 1}} &\texttt{0 0 0} &\texttt{{\bf 1} 0 0} &\texttt{0 0 {\bf 1}} &\texttt{0 {\bf 1} 0} \\
\texttt{0 0 0} &\texttt{{\bf 1} 0 {\bf 1}} &\texttt{0 0 0} &\texttt{0 {\bf 1} 0} &\texttt{{\bf 1} 0 0} &\texttt{0 0 {\bf 1}} \\
\texttt{0 0 0} &\texttt{{\bf 1} {\bf 1} 0} &\texttt{0 0 0} &\texttt{0 0 {\bf 1}} &\texttt{0 {\bf 1} 0} &\texttt{{\bf 1} 0 0} \\
\hline
\texttt{0 0 0} &\texttt{0 0 0} &\texttt{0 {\bf 1} {\bf 1}} &\texttt{0 0 {\bf 1}} &\texttt{{\bf 1} 0 0} &\texttt{0 {\bf 1} 0} \\
\texttt{0 0 0} &\texttt{0 0 0} &\texttt{{\bf 1} 0 {\bf 1}} &\texttt{{\bf 1} 0 0} &\texttt{0 {\bf 1} 0} &\texttt{0 0 {\bf 1}} \\
\texttt{0 0 0} &\texttt{0 0 0} &\texttt{{\bf 1} {\bf 1} 0} &\texttt{0 {\bf 1} 0} &\texttt{0 0 {\bf 1}} &\texttt{{\bf 1} 0 0} \\
\hline
\hline
\texttt{{\bf 1} 0 0} &\texttt{{\bf 1} 0 0} &\texttt{0 {\bf 1} 0} &\texttt{{\bf 1} 0 0} &\texttt{0 0 0} &\texttt{0 0 0} \\
\texttt{0 {\bf 1} 0} &\texttt{0 {\bf 1} 0} &\texttt{0 0 {\bf 1}} &\texttt{0 {\bf 1} 0} &\texttt{0 0 0} &\texttt{0 0 0} \\
\texttt{0 0 {\bf 1}} &\texttt{0 0 {\bf 1}} &\texttt{{\bf 1} 0 0} &\texttt{0 0 {\bf 1}} &\texttt{0 0 0} &\texttt{0 0 0} \\
\hline
\texttt{{\bf 1} 0 0} &\texttt{0 {\bf 1} 0} &\texttt{{\bf 1} 0 0} &\texttt{0 0 0} &\texttt{{\bf 1} 0 0} &\texttt{0 0 0} \\
\texttt{0 {\bf 1} 0} &\texttt{0 0 {\bf 1}} &\texttt{0 {\bf 1} 0} &\texttt{0 0 0} &\texttt{0 {\bf 1} 0} &\texttt{0 0 0} \\
\texttt{0 0 {\bf 1}} &\texttt{{\bf 1} 0 0} &\texttt{0 0 {\bf 1}} &\texttt{0 0 0} &\texttt{0 0 {\bf 1}} &\texttt{0 0 0} \\
\hline
\texttt{{\bf 1} 0 0} &\texttt{0 0 {\bf 1}} &\texttt{0 0 {\bf 1}} &\texttt{0 0 0} &\texttt{0 0 0} &\texttt{{\bf 1} 0 0} \\
\texttt{0 {\bf 1} 0} &\texttt{{\bf 1} 0 0} &\texttt{{\bf 1} 0 0} &\texttt{0 0 0} &\texttt{0 0 0} &\texttt{0 {\bf 1} 0} \\
\texttt{0 0 {\bf 1}} &\texttt{0 {\bf 1} 0} &\texttt{0 {\bf 1} 0} &\texttt{0 0 0} &\texttt{0 0 0} &\texttt{0 0 {\bf 1}} \\
\end{tabular}
}

\bigskip
{\small Figure 1: $G_{18}$, the unique extremal graph for $z(18)$.}
\end{center}

\bigskip
\bigskip
\begin{center}
{\scriptsize
\begin{tabular}{ c | c | c || c | c | c }
\texttt{2 {\bf 1} {\bf 1}} &\texttt{3 3 4} &\texttt{4 4 3} &\texttt{{\bf 1} 3 2} &\texttt{{\bf 1} 4 2} &\texttt{{\bf 1} 2 3} \\
\texttt{{\bf 1} 2 {\bf 1}} &\texttt{4 3 3} &\texttt{3 4 4} &\texttt{2 {\bf 1} 3} &\texttt{2 {\bf 1} 4} &\texttt{3 {\bf 1} 2} \\
\texttt{{\bf 1} {\bf 1} 2} &\texttt{3 4 3} &\texttt{4 3 4} &\texttt{3 2 {\bf 1}} &\texttt{4 2 {\bf 1}} &\texttt{2 3 {\bf 1}} \\
\hline
\texttt{4 4 3} &\texttt{2 {\bf 1} {\bf 1}} &\texttt{3 3 4} &\texttt{{\bf 1} 3 2} &\texttt{2 3 {\bf 1}} &\texttt{2 {\bf 1} 4} \\
\texttt{3 4 4} &\texttt{{\bf 1} 2 {\bf 1}} &\texttt{4 3 3} &\texttt{2 {\bf 1} 3} &\texttt{{\bf 1} 2 3} &\texttt{4 2 {\bf 1}} \\
\texttt{4 3 4} &\texttt{{\bf 1} {\bf 1} 2} &\texttt{3 4 3} &\texttt{3 2 {\bf 1}} &\texttt{3 {\bf 1} 2} &\texttt{{\bf 1} 4 2} \\
\hline
\texttt{3 3 4} &\texttt{4 4 3} &\texttt{2 {\bf 1} {\bf 1}} &\texttt{2 4 {\bf 1}} &\texttt{{\bf 1} 3 2} &\texttt{2 {\bf 1} 3} \\
\texttt{4 3 3} &\texttt{3 4 4} &\texttt{{\bf 1} 2 {\bf 1}} &\texttt{{\bf 1} 2 4} &\texttt{2 {\bf 1} 3} &\texttt{3 2 {\bf 1}} \\
\texttt{3 4 3} &\texttt{4 3 4} &\texttt{{\bf 1} {\bf 1} 2} &\texttt{4 {\bf 1} 2} &\texttt{3 2 {\bf 1}} &\texttt{{\bf 1} 3 2} \\
\hline
\hline
\texttt{{\bf 1} 2 4} &\texttt{{\bf 1} 2 4} &\texttt{2 {\bf 1} 3} &\texttt{{\bf 1} 2 2} &\texttt{4 3 4} &\texttt{3 3 4} \\
\texttt{4 {\bf 1} 2} &\texttt{4 {\bf 1} 2} &\texttt{3 2 {\bf 1}} &\texttt{2 {\bf 1} 2} &\texttt{4 4 3} &\texttt{4 3 3} \\
\texttt{2 4 {\bf 1}} &\texttt{2 4 {\bf 1}} &\texttt{{\bf 1} 3 2} &\texttt{2 2 {\bf 1}} &\texttt{3 4 4} &\texttt{3 4 3} \\
\hline
\texttt{{\bf 1} 2 3} &\texttt{2 {\bf 1} 4} &\texttt{{\bf 1} 2 4} &\texttt{3 4 3} &\texttt{{\bf 1} 2 2} &\texttt{4 4 3} \\
\texttt{3 {\bf 1} 2} &\texttt{4 2 {\bf 1}} &\texttt{4 {\bf 1} 2} &\texttt{3 3 4} &\texttt{2 {\bf 1} 2} &\texttt{3 4 4} \\
\texttt{2 3 {\bf 1}} &\texttt{{\bf 1} 4 2} &\texttt{2 4 {\bf 1}} &\texttt{4 3 3} &\texttt{2 2 {\bf 1}} &\texttt{4 3 4} \\
\hline
\texttt{{\bf 1} 4 2} &\texttt{2 3 {\bf 1}} &\texttt{2 4 {\bf 1}} &\texttt{3 4 4} &\texttt{4 3 3} &\texttt{{\bf 1} 2 2} \\
\texttt{2 {\bf 1} 4} &\texttt{{\bf 1} 2 3} &\texttt{{\bf 1} 2 4} &\texttt{4 3 4} &\texttt{3 4 3} &\texttt{2 {\bf 1} 2} \\
\texttt{4 2 {\bf 1}} &\texttt{3 {\bf 1} 2} &\texttt{4 {\bf 1} 2} &\texttt{4 4 3} &\texttt{3 3 4} &\texttt{2 2 {\bf 1}} \\
\end{tabular}
}

\bigskip
{\small Figure 2: $\mathcal{D}$, a 4-coloring of the edges of $K_{18,18}$ without monochromatic $C_4$'s.}
\end{center}

\bigskip

\noindent
where $S$ is the point-block bipartite subgraph of $K_{9,9}$
obtained from the unique Steiner triple system on 9 points with
any three parallel blocks removed (out of the total of 12 blocks).

Coloring $\mathcal{D}$ has 81 edges in each of the four colors,
and each of them induces a graph isomorphic to $G_{18}$. Note
that colors 1 and 2 swap and overlay their corresponding
quarters $3C_6$ and $9K_2$, so that $6K_{3,3}$ is formed.
The colors 3 and 4 have the same structure. We have constructed
8 nonisomorphic colorings with the same properties as those
listed for $\mathcal{D}$, but there may be more of them.
They were constructed as follows: First, we overlayed two quarters
$3C_6$ and $9K_2$ of the two first colors as in $\mathcal{D}$,
and then we applied some heuristics to complete the overlay of
the first two colors. Finally, the bipartite complement of
this overlay was split into colors 3 and 4 by standard
SAT-solvers. These were applied to a naturally constructed
Boolean formula, whose variables decide which of the colors 3 or 4
is used for still uncolored edges, so that no monochromatic
$C_4$ is created. Many successful splits were made, but
only 8 of them were nonisomorphic (20 if
the colors are fixed under isomorphisms), and all
of them have the same structure as $\mathcal{D}$.

\bigskip
The first open case of $b_k(2)$ is now for 5 colors, for
which we know that $26 \le b_5(2) \le 28$.
The lower bound is implied by Theorem 7, while the upper
bound by Theorem 8. We believe that the correct value is 28.

\bigskip
\begin{thm}
$$26 \le b_5(2) \le 28.$$
\end{thm}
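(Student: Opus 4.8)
The plan is to obtain both inequalities as direct specializations of the two general bounds established earlier, setting $k=5$ in each. Since $5$ is a prime and hence a prime power, Theorem~\ref{bk2} applies and immediately yields the lower bound
$$b_5(2) \ge 5^2 + 1 = 26.$$
Concretely, this is witnessed by the field-based coloring from the proof of Theorem~\ref{bk2}: take $L = R = F \times F$ with $F = \mathbb{F}_5$, and color the edge joining $(a,b) \in L$ to $(a',b') \in R$ by $a a' - (b+b') \in F$. The linear-algebra argument in that proof shows that for every color $\beta$ the two defining equations have at most one common solution $(x,y) \in R$, so no color class contains a $C_4$. Thus $K_{25,25}$ admits a $C_4$-free $5$-coloring, giving $b_5(2) > 25$.

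For the upper bound I would invoke Theorem~\ref{twbk} with $k=5$, which is permissible since $5 \ge 5$, obtaining
$$b_5(2) \le 5^2 + 5 - 2 = 28.$$
It is worth unwinding the inequality $m < n^2$ underlying Theorem~\ref{twbk} in this case to confirm genuine slack. With $n = 28$, in any $C_4$-free $5$-coloring each color class is $C_4$-free and therefore, by Theorem~\ref{tabtw2} with $h=3$, has at most $5^3 + 2\cdot 5^2 - 4\cdot 5 + 1 = 156$ edges. Hence at most $m = 5 \cdot 156 = 780$ edges can be colored, whereas $K_{28,28}$ has $n^2 = 784$ edges. Since $780 < 784$, some edge must remain uncolored, which is impossible; this contradiction forces a monochromatic $C_4$ and establishes $b_5(2) \le 28$.

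Combining the two specializations gives $26 \le b_5(2) \le 28$, exactly as claimed. There is essentially no obstacle to the theorem \emph{as stated}, since both bounds are immediate corollaries of Theorems~\ref{bk2} and~\ref{twbk}. The real difficulty — which this statement deliberately leaves open — is to close the three-value gap and determine the exact value, believed to be $28$. The margin $780 < 784$ above is only four edges, suggesting the counting method of Theorem~\ref{twbk} is nearly tight on the upper side, so the work should instead go into the lower bound: proving $b_5(2) = 28$ would require an explicit monochromatic-$C_4$-free $5$-coloring of $K_{27,27}$ (which would give $b_5(2) \ge 28$). Constructing such a coloring, presumably by a search analogous to the SAT-based completion used for $\mathcal{D}$ in the $k=4$ case, is the hard part, and it lies well beyond the elementary specialization that suffices for the bounds $26 \le b_5(2) \le 28$.
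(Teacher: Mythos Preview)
Your proof is correct and follows exactly the paper's own approach: the lower bound is the specialization of Theorem~\ref{bk2} at $k=5$, and the upper bound is the specialization of Theorem~\ref{twbk} at $k=5$. The additional unwinding of the construction and of the inequality $780<784$, together with your remarks on closing the gap, go beyond what the paper writes but are consistent with it.
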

\begin{con}
$$b_5(2)=28.$$
\end{con}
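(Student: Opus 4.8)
The bound $b_5(2)\le 28$ is Theorem \ref{twbk} with $k=5$ (indeed $k^2+k-2=28$), so the conjecture reduces to the matching lower bound $b_5(2)\ge 28$; equivalently, one must exhibit a single object, a $5$-coloring of the edges of $K_{27,27}$ with no monochromatic $C_4$. The plan is purely constructive, and I would begin with an edge-count feasibility check. Each color class is a $C_4$-free subgraph of $K_{27,27}$, and since $27=5^2+5+1-4$, Theorem \ref{tabtw} gives $z(27)\ge 147$; a $5$-coloring must partition all $27^2=729$ edges, and $5\cdot 147=735>729$, so a valid partition is arithmetically possible. However, assuming $z(27)=147$ (Conjecture \ref{wnz3}) the total slack is only $5\cdot 147-729=6$ edges, so each color class must lie within a couple of edges of the extremal value, and the five classes must simultaneously be near-extremal $C_4$-free graphs tiling $K_{27,27}$ exactly.

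For the construction I would imitate the treatment of the coloring $\mathcal{D}$ used for $b_4(2)$, combining an algebraic seed with a computer completion. The natural seed is the coloring from the proof of Theorem \ref{bk2} with $k=5$: a $5$-coloring of $K_{25,25}$ whose color classes $G_\alpha$ are each $5$-regular and $C_4$-free (with $5\cdot 5^3=625=25^2$ edges in total). I would adjoin two vertices to each side, $L'=L\cup\{x_1,x_2\}$ and $R'=R\cup\{y_1,y_2\}$, and color the $2\cdot 25+2\cdot 25+4=104$ new edges so as to preserve $C_4$-freeness in every color. The decisive local constraint is that, within a fixed color, a new vertex may be joined only to pairs of old vertices having no common neighbor in that color: in each $5$-regular $C_4$-free class only $\binom{25}{2}-25\binom{5}{2}=50$ of the $300$ pairs are free, so the new edges are severely restricted, and the choices for $x_1,x_2,y_1,y_2$ must be coordinated across all five colors at once (two new vertices on the same side may share at most one neighbor per color, and so on).

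To perform the completion I would encode the assignment of colors to the new edges as a Boolean satisfiability instance, with clauses ruling out each potential monochromatic $C_4$, and feed it to a SAT-solver exactly as the splits yielding $\mathcal{D}$ were produced; if the rigid seed turns out to be over-constrained, I would enlarge the instance to permit local recoloring of the $K_{25,25}$ part as well. Symmetry reductions coming from the automorphisms of the field construction, together with a structural classification of the extremal graphs for $z(27)$ in the spirit of the uniqueness of $G_{18}$ for $z(18)$, would be needed to keep the search tractable.

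The principal obstacle, and the reason the statement is still only a conjecture, is the extreme tightness of the target. For $k=4$ the lower-bound coloring worked out cleanly because $4\,z(18)=18^2$ holds exactly, letting four copies of the \emph{unique} extremal graph $G_{18}$ tile $K_{18,18}$ with a transparent overlay of quarters; no comparably rigid identity or structure is known at $n=27$, where the six-edge slack rules out a uniform regular tiling and forces a genuinely irregular, near-extremal partition. It is therefore unclear a priori that the $K_{25,25}$ seed extends at all, and one may be forced to drop the seed and search $K_{27,27}$ globally. Settling the question, either by producing such a coloring explicitly or by proving the corresponding instance unsatisfiable and hence that $b_5(2)\le 27$, is the heart of the matter.
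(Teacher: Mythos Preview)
The statement you were asked to prove is a \emph{conjecture}; the paper offers no proof of it, only the bounds $26\le b_5(2)\le 28$ recorded in Theorem~10 together with the remark that the authors believe the correct value is~$28$. So there is no ``paper's own proof'' to compare against.

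Your write-up is not a proof either, and you acknowledge this explicitly: it is a well-reasoned research plan. You correctly isolate the missing ingredient (a $5$-coloring of $K_{27,27}$ with no monochromatic $C_4$), correctly check arithmetic feasibility against Theorem~\ref{tabtw} and Conjecture~\ref{wnz3}, and propose to extend the algebraic $K_{25,25}$ coloring from Theorem~\ref{bk2} by a SAT-assisted completion, exactly in the spirit of how $\mathcal{D}$ was produced for $b_4(2)$. That is the natural line of attack and matches the paper's own methodology, but until an explicit coloring is exhibited (or the corresponding instance is shown unsatisfiable, which would refute the conjecture), nothing has been proved. In short: your analysis of the problem is sound and aligned with the paper, but the conjecture remains open and your proposal, by its own admission, does not settle it.
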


\section{Addendum}

We would like to add some notes on other independent work about which we became aware while our paper was in review. This includes the work by Steinbach
and Posthoff~\cite{SP}, who achieved the lower bound construction for
$b_4(2)$, but by very different means. Interestingly, their construction is isomorphic to ours. The essence of our Theorem 3 is subsumed by results in the paper by Dam\'{a}sdi, H\'{e}ger and Sz\H{o}nyi~\cite{DHS}, but our proofs are much simpler. Finally,  
Fenner, Gasarch, Glover and Purewal~\cite{FGGP} wrote a very extensive survey of the area of grid colorings, which are essentially equivalent to edge colorings of complete bipartite graphs.


\end{document}